\documentclass[11pt,a4paper,reqno]{amsart} 

\usepackage[utf8]{inputenc} 



\usepackage{graphicx} 


\usepackage{booktabs} 
\usepackage{array} 
\usepackage{paralist} 
\usepackage{verbatim} 
\usepackage{subfig} 

\usepackage{xcolor}

\usepackage{amsmath}

\newtheorem{thm}{Theorem}
\newtheorem{lem}[thm]{Lemma}

\newtheorem{cor}[thm]{Corollary}
\newtheorem{rem}{Remark}
\newtheorem{mydef}{Definition}

\usepackage{fancyhdr} 
\pagestyle{plain} 
\lhead{}\chead{}\rhead{}
\lfoot{}\cfoot{\thepage}\rfoot{}

\usepackage{hyperref}

\title{Existence of cscK metrics on smooth minimal models}
\author{Zakarias Sj\"ostr\"om Dyrefelt}
\address{The Abdus Salam International Centre for Theoretical Physics (ICTP), Str. Costiera, 11, 34151 Trieste TS, Italy. }
\email{zsjostro@ictp.it}

\subjclass[2010]{32J27, 32Q26, 53C55}

\begin{document}

\begin{abstract}
Given a compact K\"ahler manifold $X$ it is interesting to ask whether it admits a constant scalar curvature K\"ahler (cscK) metric.
In this short note we show that there always exist cscK metrics on compact K\"ahler manifolds with nef canonical bundle, thus on all smooth minimal models, and also on the blowup of any such manifold. 
This confirms an expectation of Jian-Shi-Song \cite{JianShiSong} and extends their main result from $K_X$ semi-ample to $K_X$ nef,
with a direct proof  
that 
does not appeal to the Abundance conjecture. 
As a byproduct we obtain that the connected component $\mathrm{Aut}_0(X)$ of a compact K\"ahler
manifold with $K_X$ nef is 
either trivial or 
a complex torus.
\end{abstract}

\maketitle




\section{Introduction}

\noindent Let $X$ be a compact K\"ahler manifold of dimension $n \geq 2$, let $K_X$ be the canonical bundle and $c_1(X) := c_1(-K_X)$ the associated canonical $(1,1)$-cohomology class. Going back to the work of Calabi \cite{Calabi}, existence of constant scalar curvature (cscK) metrics has been a much studied question in K\"ahler geometry. In particular, by the classical work of Yau \cite{Yau} and Aubin \cite{Aubin} it has been known for decades that cscK metrics always exist on compact K\"ahler manifolds with ample canonical bundle. Much more recently it was shown by Jian-Shi-Song \cite{JianShiSong} that cscK metrics always exist also when $K_X$ is only semi-ample. The step from ample to semi-ample is of special interest thanks to the minimal model program, and especially in view of the Abundance conjecture. Indeed, by MMP and Abundance for surfaces, every compact K\"ahler surface of Kodaira dimension $\kappa \geq 0$ is birational to a surface which is minimal.  
In particular, it follows from \cite{JianShiSong} that all such surfaces admit cscK metrics.
There are some other situations when the Abundance conjecture is known, but in general the gap $K_X$ semi-ample to $K_X$ nef is still conjecture.

In this short note we confirm the expectation of \cite{JianShiSong} that their existence result should hold in general for $-c_1(X)$ nef, with a proof  
not relying on the Abundance conjecture, and 
which is valid for arbitrary (projective or non-projective) compact K\"ahler manifolds in any dimension: 


\begin{thm} \label{Thm main}
Suppose that 
$X$ is a compact K\"ahler manifold with 
$-c_1(X)$ nef. Then for any K\"ahler class $[\omega] \in H^{1,1}(X,\mathbb{R})$, there is  $\epsilon_{X,[\omega]} > 0$, such that for all $0 < \epsilon < \epsilon_{X,[\omega]}$, there exists a unique cscK metric in the K\"ahler class $-c_1(X) + \epsilon[\omega]$. 
In particular, every smooth minimal model admits a cscK metric. 
\end{thm}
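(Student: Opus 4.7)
My plan is to reduce to coercivity of the Mabuchi $K$-energy and then invoke the Chen--Cheng existence theorem: for a compact Kähler manifold, a cscK metric exists in a Kähler class $[\alpha]$ if and only if the Mabuchi functional $\mathcal{M}_{[\alpha]}$ is $G$-coercive (modulo constants), where $G \subset \mathrm{Aut}_0(X)$ is a maximal compact subgroup. Uniqueness modulo $\mathrm{Aut}_0$ then follows from Berman--Berndtsson. The ``in particular'' clause is a formal consequence of the main statement, since a smooth minimal model is by definition a compact Kähler manifold with $K_X$ nef and so carries some Kähler class $[\omega]$.

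\textbf{Setup and decomposition.} Fix $\epsilon>0$ small and work in $[\alpha_\epsilon] := -c_1(X) + \epsilon[\omega] = c_1(K_X) + \epsilon[\omega]$. By Demailly regularization of the nef class $c_1(K_X)$, I would choose a smooth reference $\omega_\epsilon = \theta_\epsilon + \epsilon \omega \in [\alpha_\epsilon]$ with $\theta_\epsilon \in c_1(K_X)$ sufficiently close to semipositive that $\omega_\epsilon>0$. Applying Chen's decomposition of the Mabuchi functional gives
\[ \mathcal{M}_{[\alpha_\epsilon]}(\varphi) = \mathrm{Ent}(\omega_\varphi \mid \omega_\epsilon) + E^{-\mathrm{Ric}(\omega_\epsilon)}_{\omega_\epsilon}(\varphi) - \bar S_\epsilon \, E_{\omega_\epsilon}(\varphi), \]
with $\mathrm{Ent} \geq 0$, $E^\chi$ the Aubin/pluripotential twisted energy, and $\bar S_\epsilon = n\, c_1(X) \cdot [\alpha_\epsilon]^{n-1}/[\alpha_\epsilon]^n \leq 0$. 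The crucial structural point is that the twist $-\mathrm{Ric}(\omega_\epsilon)$ lies in the \emph{nef} cohomology class $c_1(K_X)$.

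\textbf{Coercivity and the main obstacle.} Using $-c_1(X) = [\alpha_\epsilon] - \epsilon[\omega]$ cohomologically, one splits the twisted energy into a nef part and an $\epsilon$-Kähler part; monotonicity of twisted Monge--Amp\`ere energies then produces a lower bound of the form $E^{-\mathrm{Ric}(\omega_\epsilon)}_{\omega_\epsilon}(\varphi) \geq -C + c\, \epsilon\, J_{\omega_\epsilon}(\varphi)$. Together with nonnegativity of entropy, this dominates $-\bar S_\epsilon E_{\omega_\epsilon}$ (which stays $O(1)$ by a cohomological intersection calculation) once $\epsilon$ is small enough, yielding the coercivity estimate $\mathcal{M}_{[\alpha_\epsilon]} \geq c_\epsilon J_{\omega_\epsilon} - C_\epsilon$. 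The delicate point---and main obstacle---is the quantitative lower bound for $E^{-\mathrm{Ric}(\omega_\epsilon)}_{\omega_\epsilon}$ when the twisting class is only \emph{nef}: mere boundedness below does not suffice, and one needs pluripotential-theoretic control (finite-energy theory after Boucksom--Eyssidieux--Guedj--Zeriahi, combined with Demailly regularization of $c_1(K_X)$) to actually extract the $\epsilon J$ gain. Automorphisms are a secondary issue, considerably simplified by the byproduct noted in the abstract: $\mathrm{Aut}_0(X)$ is a complex torus when $K_X$ is nef, so $G$-coercivity reduces to ordinary coercivity after averaging over a compact torus.
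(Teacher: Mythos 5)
Your overall frame --- reduce to coercivity of the K-energy via the Chen--Tian decomposition and invoke Chen--Cheng --- matches the paper's starting point, but the proof has a genuine gap exactly where you flag the ``main obstacle'', and the gap is not a technicality: it is the entire content of the theorem. The asserted bound $\mathrm{E}^{-\mathrm{Ric}(\omega_\epsilon)}_{\omega_\epsilon}(\varphi) \geq -C + c\,\epsilon\, \mathrm{J}_{\omega_\epsilon}(\varphi)$ does not follow from ``monotonicity of twisted Monge--Amp\`ere energies''. Lower boundedness (let alone coercivity) of a twisted energy $\mathrm{E}^{\theta}_{\omega}$ is equivalent to a nontrivial J-positivity/J-stability condition (Lejmi--Sz\'ekelyhidi, Collins--Sz\'ekelyhidi, G.~Chen) which can fail even when $[\theta]$ is K\"ahler, so nefness of the twist alone buys you nothing. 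Demailly regularization makes this worse, not better: a nef class is approximated by classes with small \emph{negative} parts $-\epsilon'\omega$, which contributes an error of the wrong sign to any lower bound. Finally, using only $\mathrm{H}_\omega \geq 0$ discards the one term that actually produces the coercivity gain: in the paper the strict positivity $\delta_1(L)-\delta_{2,K}(L) \geq \frac{n+1}{n}\alpha_X(L) > 0$ comes from the \emph{coercivity} of the entropy (Tian's alpha invariant), and this is what makes the effective twist $\theta = K_X + (\delta_1-\delta_{2,K})L$ a K\"ahler class rather than merely a nef one.

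Concretely, the paper closes the gap you left open by three steps that have no counterpart in your proposal: (i) Theorem~\ref{Lemma main}, identifying properness of the K-energy with J-coercivity of $((X,L);\theta)$ for the shifted twist $\theta = K_X + (\delta_1(L)-\delta_{2,K}(L))L$, which is K\"ahler by the alpha-invariant bound; (ii) the sufficient numerical criterion \eqref{Equation W1} of Song--Weinkove/Donaldson for J-coercivity, reformulated as the double inequality \eqref{Equation double inequality} in terms of the Seshadri-type constant $\sigma(\theta,L)$; and (iii) the computation $\sigma(K_X,L_t)=1/t$ of Lemma~\ref{Lemma one over t} along a segment $L_t$ degenerating to $K_X$, which shows the criterion holds with room to spare ($R_0(1)=1$) for classes sufficiently close to $-c_1(X)$. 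Without an argument replacing (ii) and (iii), your coercivity estimate is unproven. A secondary issue: the theorem asserts genuine uniqueness, which in the paper comes from properness with respect to $\|\cdot\|$ (not merely $G$-coercivity); your appeal to Berman--Berndtsson gives uniqueness only modulo $\mathrm{Aut}_0(X)$, and the fact that $\mathrm{Aut}_0(X)$ is a torus is here a \emph{consequence} of properness (Corollary~\ref{Cor 3}), so it cannot be used as an input without circularity.
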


\noindent In fact, we prove the stronger statement that the Mabuchi K-energy functional is proper in the above classes $-c_1(X) + \epsilon[\omega]$ 
(see Definition \ref{Definition properness} for explanations of the terminology). Since the K-energy is then invariant under automorphisms, this implies that the automorphism group is compact, and hence the connected component $\mathrm{Aut}_0(X)$ is either a singleton or a complex torus:

\begin{cor} \label{Cor 3}
Suppose that $X$ is a compact K\"ahler manifold with $-c_1(X)$ nef. Then 
$\mathrm{Aut}_0(X)$ is either trivial or a complex torus.
\end{cor}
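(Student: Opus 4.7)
The plan is to derive the corollary from the stronger, properness form of Theorem~\ref{Thm main}: given the existence of a cscK metric and properness of the Mabuchi K-energy in some class $\alpha_\epsilon := -c_1(X) + \epsilon[\omega]$, I would first force $\mathrm{Aut}_0(X)$ to be compact, and then invoke the classical structure theorem for connected compact complex Lie groups.

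First, fix a K\"ahler class $[\omega]$ and an $\epsilon > 0$ small enough that Theorem~\ref{Thm main} applies, producing a cscK metric $\omega_\epsilon$ in $\alpha_\epsilon$ and properness of the Mabuchi K-energy $M_{\alpha_\epsilon}$ on the space $\mathcal{H}_{\alpha_\epsilon}$ of K\"ahler potentials. Since $\mathrm{Aut}_0(X)$ is connected, it acts trivially on $H^{1,1}(X,\mathbb{R})$, so it preserves $\alpha_\epsilon$ and acts on $\mathcal{H}_{\alpha_\epsilon}$ by pullback; the K-energy is invariant under this action, because pullback by biholomorphisms preserves the scalar curvature and the volume form. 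The orbit $\mathrm{Aut}_0(X)\cdot\omega_\epsilon$ therefore lies in a single level set of $M_{\alpha_\epsilon}$, which is bounded by properness in the metric of Definition~\ref{Definition properness}. Combined with the (standard) properness of the $\mathrm{Aut}_0(X)$-action on $\mathcal{H}_{\alpha_\epsilon}/\mathbb{R}$, this boundedness forces $\mathrm{Aut}_0(X)$ to be compact. To finish, one invokes the classical fact that a connected compact complex Lie group is a complex torus: averaging over the group gives a bi-invariant Hermitian form, holomorphicity of the adjoint action together with compactness makes the group abelian, and a connected compact abelian complex Lie group is $\mathbb{C}^k/\Lambda$ for some lattice, giving the dichotomy in the statement.

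The main obstacle I foresee is the passage from invariance and properness of the K-energy to compactness of $\mathrm{Aut}_0(X)$: it requires Definition~\ref{Definition properness} to be properness in the strong (non-reduced) sense. If it were instead the ``properness modulo automorphisms'' familiar from the Darvas--Rubinstein framework, no direct constraint on $\mathrm{Aut}_0(X)$ would result, and one would need a supplementary argument -- for instance combining Matsushima's reductivity theorem (from the cscK hypothesis) with a Bochner-type argument exploiting $K_X$ nef to rule out any non-torus linear part of $\mathrm{Aut}_0(X)$. Aside from this clarification, the other steps are formal.
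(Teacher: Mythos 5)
Your proposal follows essentially the same route as the paper: strong (non-reduced) properness of the K-energy from Theorem~\ref{Thm main}, invariance of the K-energy under $\mathrm{Aut}_0(X)$, boundedness of the orbit of a cscK potential, compactness of $\mathrm{Aut}_0(X)$, and then the classical fact that a connected compact complex Lie group is a complex torus. Your worry about reduced versus non-reduced properness is resolved the way you hoped: Definition~\ref{Definition properness} is coercivity against $\|\varphi\|$ with no quotient by automorphisms, so no Matsushima/Bochner supplement is needed. The one place where you are too quick is the phrase ``combined with the (standard) properness of the $\mathrm{Aut}_0(X)$-action on $\mathcal{H}_{\alpha_\epsilon}/\mathbb{R}$'': that properness of the orbit map is not an off-the-shelf fact but is exactly where the paper does its work. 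Concretely, one writes $g_j = k_j\exp_I(JV_j)$ with $k_j$ in the compact isometry group and $V_j\in\mathfrak{isom}(X,\omega_v)$ (Matsushima--Lichnerowicz plus \cite[Proposition 6.2]{DR}), uses that $t\mapsto \exp_I(-tJV_j).v$ is a $d_1$-geodesic ray (\cite[Section 7.1]{DR}) together with \cite[Lemma 3.7]{BDL} to convert the orbit bound $d_1(g_j.v,v)\le C$ into a uniform bound on $\|V_j\|$, and then extracts a smoothly convergent subsequence. Without some such argument, boundedness of the orbit alone does not yield compactness of the group, so you should either supply this decomposition-and-geodesic-ray step or cite it explicitly rather than label it standard.
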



\noindent This result adds to the literature on automorphism groups for projective algebraic varieties with $K_X$ semi-ample, 
in which case the corresponding result is well-known (see e.g. \cite[Theorem 8.1]{Kobayashibook} and references therein). 


As a further application of Theorem \ref{Thm main} we note that if $-c_1(X)$ is nef, then $X$ cannot support any non-trivial Hamiltionian vector fields, and hence the machinery of Arezzo-Pacard \cite{ArezzoPacard} yields  
existence of cscK metrics also on blowups of smooth minimal models: 

\begin{cor} \label{Cor 2}
Blowups of compact K\"ahler manifolds with  $-c_1(X)$ nef admit cscK metrics. 
\end{cor}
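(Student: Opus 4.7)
The plan is to combine Theorem \ref{Thm main} with the gluing construction of Arezzo--Pacard \cite{ArezzoPacard}, which produces a cscK metric on the blowup $\pi : \widetilde X \to X$ of a compact cscK manifold $(X,\omega)$ at any finite collection of points, provided that $X$ carries no non-trivial Hamiltonian holomorphic vector field (equivalently, no non-zero element of $H^0(X, T_X)$ vanishes at a point). The resulting cscK class on $\widetilde X$ has the form $\pi^*[\omega] - \varepsilon^2 \sum_i [E_i]$ for all sufficiently small $\varepsilon > 0$. Theorem \ref{Thm main} supplies the cscK metric on $X$ in a class of the form $-c_1(X) + \epsilon[\omega_0]$, so the only thing to verify before appealing to \cite{ArezzoPacard} is the hypothesis on Hamiltonian vector fields.

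This is where Corollary \ref{Cor 3} is used. If $\mathrm{Aut}_0(X)$ is trivial there is nothing to check. If $\mathrm{Aut}_0(X)$ is a complex torus $T$, I claim that the action of $T$ on $X$ has no fixed points; granting this, every non-trivial holomorphic vector field on $X$ lies in $\mathrm{Lie}(T)$ and is nowhere vanishing, hence not Hamiltonian. To prove the claim, I would use that $T$ is a compact complex Lie group, so by the maximum principle every holomorphic function on $T$ is constant, and consequently every holomorphic representation $T \to GL_n(\mathbb{C})$ is trivial. The isotropy representation at any would-be fixed point $p$ would therefore be trivial, and Bochner's holomorphic linearization theorem for compact group actions would force $T$ to act trivially on a neighborhood of $p$, hence (by connectedness and the identity principle for holomorphic maps) on all of $X$. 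This contradicts the faithfulness of the inclusion $\mathrm{Aut}_0(X) \hookrightarrow \mathrm{Aut}(X)$, establishing the claim.

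With the hypothesis on Hamiltonian vector fields verified, the corollary follows from a direct appeal to \cite{ArezzoPacard}. I expect the main obstacle to lie not in any analytic subtlety but in pinning down the precise no-Hamiltonian-vector-field statement needed to trigger Arezzo--Pacard; once that structural input is in place, the entire gluing and perturbation argument that actually produces the cscK metric on $\widetilde X$ is already encapsulated in their work.
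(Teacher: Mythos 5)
Your overall strategy (produce a cscK metric on $X$ via Theorem \ref{Thm main}, then invoke Arezzo--Pacard after checking the no-Hamiltonian-vector-field hypothesis) is the same as the paper's, but your verification of that hypothesis goes through Corollary \ref{Cor 3} and Bochner linearization rather than through properness of the K-energy, and as written it has a genuine gap. The problem is the inference ``$T=\mathrm{Aut}_0(X)$ has no fixed points, hence every non-trivial $V\in\mathrm{Lie}(T)$ is nowhere vanishing.'' The zero set of $V$ is the fixed-point set of the one-parameter subgroup $\{\exp(tV)\}_{t\in\mathbb{C}}$, equivalently of its closure $S\subseteq T$, which is in general only a \emph{real} subtorus --- not the fixed-point set of all of $T$. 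For general torus actions a one-parameter subgroup can perfectly well have fixed points while the whole torus has none (rotate both factors of $S^2\times S^1$ by a $2$-torus, say). Moreover your key step ``every holomorphic representation of a compact complex Lie group is trivial'' does not apply to $S$, since $S$ need not be a complex subgroup of $T$, and real tori have plenty of non-trivial unitary isotropy representations. So what you actually prove --- that $T$ itself has no fixed point --- does not by itself deliver the hypothesis Arezzo--Pacard needs.

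The gap is repairable within your own framework: if $V(p)=0$ with $V\neq 0$, the stabilizer of $p$ in $T$ is a compact group, so the isotropy map $t\mapsto d(\exp(tV))|_p$ is a bounded entire map $\mathbb{C}\to GL(T_pX)$, hence constant $=\mathrm{id}$ by Liouville; Bochner linearization of the compact stabilizer at $p$ then forces $\exp(tV)=\mathrm{id}$ near $p$, whence $V\equiv 0$ by the identity principle --- a contradiction. (Alternatively, quote the Fujiki--Lieberman structure theorem: the holomorphic vector fields with zeros span the Lie algebra of a closed connected linear algebraic normal subgroup of $\mathrm{Aut}_0(X)$, and a complex torus contains no non-trivial such subgroup.) The paper sidesteps all of this structure theory: it takes a putative non-trivial Hamiltonian Killing field $V$ for the cscK metric $\omega_v$, uses Darvas--Rubinstein to produce the $d_1$-geodesic ray $t\mapsto\exp_I(-tJV).v$ along which the K-energy is invariant while $d_1(v,\exp_I(-tJV).v)\to+\infty$, and contradicts the properness already established in the proof of Theorem \ref{Thm main}. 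That route requires no input from Corollary \ref{Cor 3} (which is itself derived from properness), so you should either repair your Bochner step as above or switch to the properness argument.
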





\noindent 
A few comments on the proof of Theorem \ref{Thm main} are in order:
Just as in \cite{JianShiSong} our proof partly relies on the work of Chen-Cheng \cite{ChenChengII}, which is concerned with coercivity of energy functionals on the closure $\mathcal{E}^1$ of the space of K\"ahler metrics. The new ingredient is to combine the machinery in \cite{ChenChengII} with Collins-Sz\'ekelyhidi \cite{CollinsGabor} and an observation about stability thresholds, namely the following numerical quantity 
$$
\delta([\omega]) := \delta_1([\omega]) - \delta_2([\omega])
$$
where
$$
\delta_1([\omega]) := \sup \{ \delta_1 \in \mathbb{R} : \exists C > 0, \mathrm{M}(\varphi) \geq \delta_1||\varphi|| - C, \forall \varphi \in \mathcal{E}^1 \} 
$$
and
$$
\delta_2([\omega]) := \sup \{ \delta_2 \in \mathbb{R} : \exists C > 0, \mathrm{E}_{\omega}^{-\rho_{\omega}}(\varphi) \geq \delta_2||\varphi|| - C, \forall \varphi \in \mathcal{E}^1 \}.
$$
Here $\mathrm{M}$ is the K-energy functional, $\mathrm{E}_{\omega}^{-\rho_{\omega}}$ is the energy part in its Chen-Tian decomposition, see \cite{Chen2000}.
Note that the above thresholds are well-defined real numbers, so in particular they cannot take the value minus infinity (see e.g. \cite[Proposition 12]{SD4}).
By coercivity of entropy we also observe that $\delta([\omega]) \geq \frac{n+1}{n} \alpha_X([\omega]) > 0$ where the latter denotes the classical alpha invariant of Tian \cite{Tian}. 

In order to streamline the proof of Theorem \ref{Thm main} we introduce the following shorthand terminology related to the J-equation and J-stability: 
Let $\theta$ be any smooth $(1,1)$-form on $X$ and write $[\theta] \in H^{1,1}(X,\mathbb{R})$ for the associated cohomology class. We say that $((X,[\omega]); [\theta])$ is \emph{J-coercive} (or \emph{J-stable}, see \cite{CollinsGabor, GaoChen}) if and only if the $E_{\omega}^{\theta}$-functional is coercive on the space $\mathcal{E}^1 := \mathcal{E}^1(X,\omega)$ of finite energy potentials, i.e. there are constants $\delta, C > 0$ such that 
$$
\mathrm{E}_{\omega}^{\theta}(\varphi) \geq ||\varphi|| - C
$$
for all $\varphi \in \mathcal{E}^1$.

\noindent The strategy of proof is then based on the following elementary observation, which characterizes properness of the Mabuchi K-energy via a condition that in view of \cite{SD4} can be interpreted as a geometric condition on the K\"ahler cone:

\begin{thm} \label{Lemma main} 
Suppose that $-c_1(X)$ is nef. Then the K-energy is proper on the space of K\"ahler metrics cohomologous to $\omega$ precisely if $((X,[\omega]); [\theta])$ is J-coercive, where 
$$
[\theta] := -c_1(X) + \delta([\omega])[\omega].
$$
In particular, if either of the above conditions hold, then $X$ admits a cscK metric in the K\"ahler class $[\omega] \in H^{1,1}(X,\mathbb{R})$.
\end{thm}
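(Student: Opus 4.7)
\medskip
\noindent\emph{Proof plan.} The strategy is to establish the ``precisely if'' by direct bookkeeping of the coercivity slopes $\delta_1$ and $\delta_2$ defined above, exploiting the Chen--Tian decomposition of the K-energy together with the linearity of $\mathrm{E}^\theta_\omega$ in $\theta$, and then to invoke Chen--Cheng for the cscK existence.

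First I would invoke the Chen--Tian decomposition, which expresses $\mathrm{M}$ (up to additive constants and normalization) as the sum of a non-negative entropy term $H_\omega$ and the two energy pieces $\mathrm{E}^{-\rho_\omega}_\omega$ and $\mathrm{E}_\omega$, the latter weighted by a cohomological constant. Linearity of $\mathrm{E}^\theta_\omega$ in $\theta$ gives
$$\mathrm{E}^\theta_\omega(\varphi) = \mathrm{E}^{-\rho_\omega}_\omega(\varphi) + \delta([\omega])\,\mathrm{E}_\omega(\varphi),$$
so J-coercivity of $((X,[\omega]);[\theta])$ becomes a coercivity statement about the same two energy pieces appearing in Chen--Tian, now in the specific linear combination determined by $\delta([\omega])$. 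The algebraic core of the argument is that the shift $\delta([\omega]) := \delta_1([\omega])-\delta_2([\omega])$ is engineered precisely so that the best coercivity slope of $\mathrm{E}^\theta_\omega$ on $\mathcal{E}^1$ coincides with $\delta_1([\omega])$. Granting this, $\mathrm{M}$ proper (i.e.\ $\delta_1>0$) and $\mathrm{E}^\theta_\omega$ J-coercive are equivalent. The direction ``proper $\Rightarrow$ J-coercive'' follows from non-negativity of entropy together with linearity; the converse uses the entropy coercivity of Chen--Cheng \cite{ChenChengII} to upgrade a lower bound on the energy part to a lower bound on the full K-energy $\mathrm{M}$.

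Once $\mathrm{M}$ is shown to be proper, a direct appeal to \cite{ChenChengII} yields a unique cscK metric in $[\omega]$; the nefness hypothesis on $-c_1(X)$ enters the argument via the bound $\delta([\omega]) \geq \tfrac{n+1}{n}\alpha_X([\omega]) > 0$ quoted before the theorem, which in particular guarantees that $[\theta]$ is K\"ahler so that the J-coercivity notion is non-vacuous. The main technical obstacle is proving the additivity of the best coercivity slopes under the linear decomposition above, since the definitions give only the subadditive inequality $\delta_2(\theta) \geq \delta_2([\omega]) + \delta([\omega])\,\delta_2(\omega)$. Achieving the matching upper bound, and thereby equality, requires the geodesic-ray/slope framework of \cite{SD4}; this is precisely what pins down $\delta([\omega]) = \delta_1-\delta_2$ as the correct shift for the equivalence to hold.
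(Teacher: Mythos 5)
Your proposal is correct and follows essentially the same route as the paper: the shift $\delta([\omega]) = \delta_1([\omega]) - \delta_2([\omega])$ is engineered exactly so that the optimal coercivity slope of $\mathrm{E}_\omega^{\theta}$ equals $\delta_1([\omega])$, whence J-coercivity of $((X,[\omega]);[\theta])$ is equivalent to properness of $\mathrm{M}$, and Chen--Cheng \cite{ChenChengII} then converts properness into existence of a cscK metric. One clarification: the ``additivity of slopes'' that you flag as the main technical obstacle is in fact immediate and requires no geodesic-ray input from \cite{SD4}, because $\mathrm{E}_\omega^{\omega} = (\mathrm{I}_\omega - \mathrm{J}_\omega) = ||\cdot||$ is precisely the reference functional against which coercivity is measured, so adding $\delta([\omega])\,\mathrm{E}_\omega^{\omega}$ shifts the optimal slope by exactly $\delta([\omega])$ (and, likewise, neither direction of the equivalence needs entropy coercivity --- only the finiteness of $\delta_1,\delta_2$ and, for the Chen--Tian comparison, the non-negativity of $\mathrm{H}_\omega$).
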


\noindent Given a compact K\"ahler manifold with $-c_1(X)$ nef, the idea is then to show that the J-coercivity condition of the above theorem is always satisfied for any K\"ahler class close enough to $-c_1(X)$. 
In fact a bit more can be said, as shown by the proofs of Theorem \ref{Thm main} and Theorem \ref{Lemma main} in the next section.
%

\begin{rem}
\emph{By a minor alteration of the proof of Theorem \ref{Thm main} we can more generally obtain a version of the above for the twisted cscK metrics that play a central role in the continuity method of Chen-Cheng \cite{ChenChengII}: Suppose that $\eta \in H^{1,1}(X,\mathbb{R})$ is a $(1,1)$-form such that $-c_1(X) + \eta$ is nef. Then for any K\"ahler class $[\omega]$ on $X$, there is  $\epsilon_{X,[\omega]} > 0$ such that for all $0 < \epsilon < \epsilon_{X,[\omega]}$, there exists a unique $\eta$-twisted cscK metric, that is, a solution to $\mathrm{Tr}_{\beta} (-\mathrm{Ric}(\omega) + \eta) = \mathrm{constant},$ in the K\"ahler class $[\beta] = -c_1(X) + \eta + \epsilon[\omega]$. This follows by simply replacing $-c_1(X)$ by $-c_1(X) + \eta$ everywhere in the proof. }
\end{rem}

\subsection*{Acknowledgements} The author is grateful to Jian Song, Yalong Shi and Claudio Arezzo for helpful comments and discussions, as well as to the anonymous referee for improving the paper, in particular by pointing out Corollary \ref{Cor 3}. 

\medskip

\section{Proof} \label{Section energy functionals} 

\noindent 
We quickly recall the notation and terminology 
for energy functionals 
that we will use, which is the standard variational setup frequently used throughout the K\"ahler geometry literature. 
To introduce our notation, let $(X,\omega)$ be a compact K\"ahler manifold of complex dimension $n \geq 2$ and write $\gamma := [\omega]\in H^{1,1}(X,\mathbb{R})$ for the associated K\"ahler class. 
Let $$V_{\gamma} := \int_X \frac{\omega^n}{n!}$$ be the K\"ahler volume of $(X,\omega)$. 
Let $\rho_{\omega}$ be the Ricci curvature form, normalized such that $[\rho_{\omega}] = c_1(X)$, and write $S(\omega) := \mathrm{Tr}_{\omega}\rho_{\omega}$ for the scalar curvature of $(X,\omega)$. By a standard abuse of terminology we say that $\omega$ is a constant scalar curvature K\"ahler (cscK) metric if $S(\omega)$ is constant.
Denote the automorphism group of $X$ by $\mathrm{Aut}(X)$ and its connected component of the identity by $\mathrm{Aut}_0(X)$. Write $\mathcal{C}_X \subset H^{1,1}(X,\mathbb{R})$ for the cone of K\"ahler cohomology classes on $X$. Let $\overline{\mathcal{C}}_X$ be the nef cone, $\partial \mathcal{C}_X$ its boundary, and let $\mathrm{Big}_X$ be the cone of big $(1,1)$-classes on $X$. 

\noindent We write $(\mathcal{H}_{\omega},d_1)$ for the space of K\"ahler potentials on $X$ endowed with the $L^1$-Finsler metric $d_1$, and 
denote by $(\mathcal{E}^1,d_1)$ its metric completion (see \cite{Darvas14, Darvas15, Darvassurvey, Darvas} and references therein). Write $\mathrm{PSH}(X,\omega) \cap L^{\infty}(X)$ for the space of bounded $\omega$-psh functions on $X$.

Now consider $\varphi \in \mathrm{PSH}(X,\omega) \cap L^{\infty}(X)$.
We may define well-known energy functionals
$$
\mathrm{I}_{\omega}(\varphi) := \frac{1}{V_{\gamma}n!} \int_X \varphi (\omega^n - \omega_{\varphi}^n)
$$ 
$$
\mathrm{J}_{\omega}(\varphi) = \frac{1}{V_{\gamma}n!}\int_X \varphi \omega^n - \frac{1}{V_{\gamma}(n+1)!} \int_X \varphi \sum_{j = 0}^n \omega^j \wedge \omega_{\varphi}^{n-j}
$$
$$
\mathrm{E}_{\omega}^{\theta}(\varphi) := \frac{1}{V_{\gamma}n!} \int_X \varphi \sum_{j = 0}^{n-1} \theta \wedge \omega^j \wedge \omega_{\varphi}^{n-j-1} - \frac{1}{V_{\gamma}(n+1)!}\int_X \varphi \underline{\theta} \sum_{j = 0}^{n} \omega^j \wedge \omega_{\varphi}^{n-j}   
$$
where $\theta$ is any smooth closed $(1,1)$-form on $X$ and $\underline{\theta}$ is the topological constant given by 
$$
\underline{\theta} := \frac{\int_X \theta \wedge \frac{\omega^{n-1}}{(n-1)!}}{\int_X \frac{\omega^n}{n!}}.
$$ By the Chen-Tian formula \cite{Chen2000} the K-energy functional can be written as the sum of an energy/pluripotential part and an entropy part as
\begin{equation} \label{Equation K-energy}
\mathrm{M}_{\omega} = \mathrm{E}_{\omega}^{-\rho_{\omega}} + \mathrm{H}_{\omega}
\end{equation}
where 
$$
\mathrm{H}_{\omega}(\varphi) := \frac{1}{V_{\gamma}n!} \int_X \log \left( \frac{\omega_{\varphi}^n}{\omega^n} \right) \omega_{\varphi}^n
$$
is the relative entropy of the probability measures $\omega_{\varphi}^n/V_{\gamma}$ and $\omega^n/V_{\gamma}$. In particular, it is well known that $\mathrm{H}_{\omega}(\varphi)$ is always non-negative. 

For any given smooth closed $(1,1)$-form $\theta$ on $X$ we also consider the $\theta$-twisted K-energy functional
$$
\mathrm{M}_{\omega}^{\theta} := \mathrm{M}_{\omega} + \mathrm{E}_{\omega}^{\theta}.
$$
As explained in \cite{SD4}, it will also in this paper be convenient to measure properness/coercivity of the K-energy against the 
functional
$$
||\varphi|| := (\mathrm{I}_{\omega} - \mathrm{J}_{\omega})(\varphi) = \frac{1}{V_{\gamma}(n+1)!} \int_X  \varphi \sum_{j=0}^n \omega^j \wedge \omega_{\varphi}^{n-j} - \frac{1}{V_{\gamma}n!} \int_X \varphi \omega_{\varphi}^n
$$
rather than against the Aubin $\mathrm{J}$-functional or the $d_1$-distance. 

\begin{mydef} \label{Definition properness}
Let $\mathrm{F}: \mathcal{E}^1 \rightarrow \mathbb{R}$ be any of the above considered energy functionals. We then say that $\mathrm{F}$ is coercive if 
$$
\mathrm{F}(\varphi) \geq \delta ||\varphi|| - C
$$
for some $\delta, C > 0$ and all $\varphi \in \mathcal{E}^1$.
\end{mydef}




\noindent In the rest of this note we will use the notation of projective polarized manifolds $(X,L)$, while emphasizing that this is only cosmetic, and all the arguments below hold for arbitrary K\"ahler classes on arbitrary (possibly non-projective) compact K\"ahler manifolds.

\subsection*{Proof of Theorem \ref{Lemma main}}
The proof of Theorem \ref{Lemma main} is a simple reformulation of the fact that existence of cscK metrics in a K\"ahler class $c_1(L)$ is connected to J-stability/J-coercivity of the triple $((X,L);K)$ as in the introduction, where $K := K_X$. 
We will use the following necessary and sufficient existence criteria due to \cite{Weinkove1, Weinkove2} and \cite{DonaldsonJobservation}: A polarised manifold $((X,L);K)$ is J-stable 
if 
\begin{equation} \label{Equation W1}
n \frac{K.L^{n-1}}{L^n}L - (n-1)K > 0,
\end{equation}
that is this divisor is ample, and a necessary condition for J-stability of $((X,L);K)$ is that 
\begin{equation} \label{Equation W2}
n \frac{K.L^{n-1}}{L^n}L - K > 0.
\end{equation}
In dimension $n = 2$ the condition thus becomes necessary and sufficient. 

These conditions can equivalently be expressed in terms of stability thresholds, following \cite{SD4}. Indeed, in the notation of that paper we can reformulate the above conditions as a double inequality
\begin{equation} \label{Equation double inequality}
n \frac{K.L^{n-1}}{L^n} - (n-1)\sigma(K,L) \leq 
\delta_{2,K}(L) 
\leq n \frac{K.L^{n-1}}{L^n} - \sigma(K,L),
\end{equation}
where 
$$
\sigma(K,L) := \inf \{\lambda \in \mathbb{R}: K - \lambda L < 0 \}
$$
is a Seshadri type constant and 
$$
\delta_{2,K}(L) 
:= \sup \{\delta \in \mathbb{R} : \exists C > 0, \mathrm{E}_{\omega}^{-\rho_{\omega}}(\varphi) \geq \delta ||\varphi|| - C, \forall \varphi \in \mathcal{E}^1 \}
$$ 
is the optimal coercivity constant for the \emph{energy part} $\mathrm{E}_{\omega}^{-\rho_{\omega}}$ of the Mabuchi K-energy, which by \cite{CollinsGabor} is positive if and only if the triple $((X,L);K)$ is J-stable.

The relationship with the cscK equation follows by considering in the same way the optimal coercivity constant 
$$
\delta_1(L) := \sup \{\delta \in \mathbb{R} : \exists C > 0, \mathrm{M}(\varphi) \geq \delta ||\varphi|| - C, \forall \varphi \in \mathcal{E}^1 \}
$$ 
associated to the K-energy, appealing to the results of \cite{ChenChengII},
and then relating it to J-stability of $((X,L);K)$ by observing that
$$
\delta_1(L) = \delta_{2,K + (\delta_1(L) - \delta_{2,K}(L))L}(L).
$$
In other words the K-energy is proper on the space of K\"ahler metrics in $c_1(L)$ if and only if $((X,L);\theta)$ is J-stable with respect to the smooth 
$(1,1)$-form defined by
\begin{equation*} 
\theta := \theta_{L} := K + (\delta_1(L) - \delta_{2,K}(L))L.
\end{equation*}
Moreover, if any of the above equivalent conditions hold, then $(X,L)$ is a cscK manifold, which is precisely the statement of Theorem \ref{Lemma main}. Note moreover that $\theta$ is not necessarily assumed positive here, since while stability requires positivity to make sense, coercivity can be defined more generally with respect to any smooth $(1,1)$-form $\theta$ on $X$.

\subsection*{Proof of Theorem \ref{Thm main}}

Motivated by Theorem \ref{Lemma main} we now study the $(1,1)$-form $\theta := \theta_L$ more closely. The first thing to comment on is that we cannot expect to compute $\theta_L$ in any easy manner, since then we would know essentially everything about the existence problem for cscK metrics. However, it is immediate to observe the following inequality
\begin{equation} \label{equation inequality}
\delta_1(L) - \delta_{2,K}(L) \; \geq \frac{n+1}{n}\alpha_X(L)  > 0,
\end{equation}
which follows directly from the fact that entropy is coercive (in fact, with the bound $\mathrm{H}(\varphi) \geq \epsilon||\varphi|| - C$ for every $\epsilon < \frac{n+1}{n}\alpha_X(L)$).

Suppose now once and for all that the canonical bundle $K := K_X$ is nef. Then by \eqref{equation inequality} it follows that $\theta$ is ample. 
By adding $\delta_1(L) - \delta_{2,K}(L)$ to each side of the double inequality \eqref{Equation double inequality}, we deduce a similar double inequality also for $\delta_1(L)$:
\begin{equation} \label{Equation double inequality 2}
n \frac{\theta.L^{n-1}}{L^n} - (n-1)\sigma(\theta,L) \leq \delta_1(L) \leq n \frac{\theta.L^{n-1}}{L^n} - \sigma(\theta,L).
\end{equation}
In order to make use of this observation, it will be enough to show that there exists an ample line bundle $L$ on $X$ for which the left hand side of \eqref{Equation double inequality 2} is strictly positive. This is achieved by a simple application of the methods in \cite{SD4}.  
%
%
In case $n=2$ the subcone of the K\"ahler cone where the $\mathrm{E}_{\omega}^{\theta}$-functional is coercive with respect to a given $\theta$ is described explicitly in \cite{SD4}. In higher dimension a complete description was not given, but all that is needed for the proof of our main result are
the criteria \eqref{Equation W1}, \eqref{Equation W2} and the following lemma, whose proof is identical to that in \cite{SD4}: 

\begin{lem} \label{Lemma one over t} Let $\Lambda, T$ be line bundles such that $c_1(\Lambda) \in \mathcal{C}_X$ and $c_1(T) \in \partial \mathcal{C}_X$, that is, $\Lambda$ is ample and $T$ is nef but not ample. 
Then $L_t := (1-t)T + t\Lambda$, $t \in [0,1]$ satisfies
$$
\sigma(\Lambda,L_t) = \frac{1}{t}.
$$
The same result holds if $\Lambda \in \partial \mathcal{C}_X$, as long as $c_1(L_t) \in \mathcal{C}_X$ for all $t \in (0,1)$. 
\end{lem}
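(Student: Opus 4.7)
The plan is to compute $\sigma(\Lambda, L_t) = \inf\{\lambda : \lambda L_t - \Lambda \in \overline{\mathcal{C}}_X\}$ directly from the definition by expanding
$$
\lambda L_t - \Lambda = \lambda(1-t)T + (\lambda t - 1)\Lambda.
$$
Substituting $\lambda = 1/t$ gives $\frac{1-t}{t} T$, which is nef since $T$ is, so this immediately yields the upper bound $\sigma(\Lambda, L_t) \leq 1/t$. For $\lambda > 1/t$ the expression is a positive combination of two nef classes and hence nef a fortiori.

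For the matching lower bound I would argue by contradiction and suppose that $\lambda L_t - \Lambda \in \overline{\mathcal{C}}_X$ for some $0 < \lambda < 1/t$ (the case $\lambda \leq 0$ is trivial since then $\lambda L_t - \Lambda$ is anti-Kähler and cannot be nef). Dividing by the positive scalar $\lambda(1-t)$, this is equivalent to $T - c\Lambda \in \overline{\mathcal{C}}_X$ with $c = \frac{1-\lambda t}{\lambda(1-t)} > 0$. If $\Lambda \in \mathcal{C}_X$ is Kähler, then I get the immediate contradiction
$$
T = (T - c\Lambda) + c\Lambda \in \overline{\mathcal{C}}_X + \mathcal{C}_X \subset \mathcal{C}_X,
$$
which contradicts the hypothesis $T \in \partial \mathcal{C}_X$.

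The main obstacle is the boundary case $\Lambda \in \partial \mathcal{C}_X$, which I would handle by a midpoint argument exploiting the auxiliary assumption that $L_t \in \mathcal{C}_X$. First, convexity of $\overline{\mathcal{C}}_X$ implies that $T - c\Lambda \in \overline{\mathcal{C}}_X$ propagates to $T - \delta \Lambda \in \overline{\mathcal{C}}_X$ for every $\delta \in [0,c]$, since $T - \delta\Lambda = (T - c\Lambda) + (c-\delta)\Lambda$ is a sum of nef classes. Meanwhile, the identity $T + \epsilon \Lambda = \left(1 - \frac{\epsilon(1-t)}{t}\right)T + \frac{\epsilon}{t} L_t$ expresses $T + \epsilon\Lambda$ as a positive combination of a nef and a Kähler class for all sufficiently small $\epsilon > 0$, and hence as itself Kähler. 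Choosing $\delta < \min\bigl(c, t/(1-t)\bigr)$, the identity
$$
T = \tfrac{1}{2}\bigl((T - \delta \Lambda) + (T + \delta\Lambda)\bigr)
$$
then realises $T$ as a positive combination of a nef and a Kähler class, hence again as Kähler, contradicting $T \in \partial \mathcal{C}_X$. This completes the proof in both cases and gives $\sigma(\Lambda, L_t) = 1/t$.
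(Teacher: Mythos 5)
Your proof is correct, and it takes a genuinely different, more self-contained route than the paper's. The paper disposes of the main case by citing \cite[Lemma 16]{SD4} and treats the boundary case via the reciprocal quantity $\sigma'(\beta,\gamma) := \sup \{\delta > 0 : \beta - \delta\gamma > 0\}$, the identity $\sigma(\Lambda, L_t) = \sigma'(L_t,\Lambda)^{-1}$, and an asserted linearity of $t \mapsto \sigma'(L_t,\Lambda)$ along the segment, evaluated at the endpoints. You instead compute directly from the definition: the decomposition $\lambda L_t - \Lambda = \lambda(1-t)T + (\lambda t - 1)\Lambda$ gives the upper bound at $\lambda = 1/t$ at a glance, and the lower bound reduces to showing that $T - c\Lambda$ cannot be nef for any $c > 0$. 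Your midpoint argument in the case $\Lambda \in \partial \mathcal{C}_X$ --- writing $T$ as the average of the nef class $T - \delta\Lambda$ and the class $T + \delta\Lambda$, the latter K\"ahler because $T + \epsilon\Lambda$ is a positive combination of the nef class $T$ and the K\"ahler class $L_t$ for small $\epsilon$ --- is precisely where the hypothesis $c_1(L_t) \in \mathcal{C}_X$ enters, and it is arguably cleaner than the paper's endpoint evaluation, which is stated rather loosely when $L_1 = \Lambda$ itself lies on the boundary. Two minor points you could flag explicitly: (i) the paper defines $\sigma(K,L)$ via the open condition $K - \lambda L < 0$ rather than via nefness of $\lambda L - K$, but since $L_t$ is K\"ahler for $t \in (0,1)$ the two infima coincide (any $\lambda$ strictly above a nef parameter value yields a K\"ahler class); (ii) the statement should be read for $t \in (0,1)$, as your divisions by $t$ and $1-t$ implicitly assume. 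Neither affects the correctness of your argument.
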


\begin{proof}
The first part is \cite[Lemma 16]{SD4}. For the second part, if we write $\sigma'(\beta,\gamma) := \sup \{\delta > 0 : \beta - \delta\gamma > 0\}$ then $\sigma(\Lambda, L_t) = \sigma'(L_t,\Lambda)^{-1}.$ We moreover have $\sigma'(L_t, \Lambda) = (1-t)\sigma'(L_0,\Lambda) + t \sigma'(L_1,\Lambda).$
If $L_0, L_1 \in \partial \mathcal{C}_X$ are boundary classes such that $L_t \in \mathcal{C}_X$ for all $t \in (0,1)$, then $\sigma'(L_0,\Lambda) = 0$ and $\sigma'(L_0,\Lambda) = 1$.  
\end{proof}

\noindent We are finally ready to prove the main result, extending \cite{JianShiSong} and replacing the hypothesis of $K_X$ semi-ample with $K_X$ nef, as predicted by the Abundance conjecture. We recall that while it is here presented in the terminology of projective polarized manifolds $(X,L)$, it is valid also for arbitrary compact K\"ahler manifolds with the same proof.

\begin{thm} \emph{(Theorem \ref{Thm main})}
Suppose that $K_X$ is nef. Then for any ample line bundle $L$ on $X$, there is $\epsilon_{X,L} > 0$, such that for all $0 < \epsilon < \epsilon_{X,L}$, there exists a unique cscK metric in the K\"ahler class $-c_1(X) + \epsilon c_1(L)$. In particular, every smooth minimal model $X$ admits a cscK metric. 
\end{thm}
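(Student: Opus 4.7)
The plan is to apply Theorem~\ref{Lemma main} to the ample class $c_1(L_\epsilon)$ where $L_\epsilon := K_X + \epsilon L$, noting that $L_\epsilon$ is ample for $\epsilon > 0$ as the sum of the nef class $K_X$ and a positive multiple of the ample class $L$. By that theorem, existence of a cscK metric in $c_1(L_\epsilon)$ reduces to J-coercivity of $((X, L_\epsilon);\theta_{L_\epsilon})$, where $\theta_{L_\epsilon} := K_X + c_\epsilon L_\epsilon$ and $c_\epsilon := \delta_1(L_\epsilon) - \delta_{2,K}(L_\epsilon)$. By~\eqref{equation inequality} we have $c_\epsilon \geq \tfrac{n+1}{n}\alpha_X(L_\epsilon) > 0$, so $\theta_{L_\epsilon}$ is automatically ample (nef plus a positive multiple of ample), which is the prerequisite for invoking the sufficient criterion below.

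To verify J-coercivity I would use the Weinkove sufficient criterion~\eqref{Equation W1}, which requires ampleness of the divisor
\[
D_\epsilon \;:=\; n\,\frac{\theta_{L_\epsilon}\cdot L_\epsilon^{n-1}}{L_\epsilon^n}\,L_\epsilon \;-\; (n-1)\,\theta_{L_\epsilon}.
\]
Substituting $\theta_{L_\epsilon} = K_X + c_\epsilon L_\epsilon$ and $L_\epsilon = K_X + \epsilon L$, a direct calculation yields
\[
D_\epsilon \;=\; \bigl(a_\epsilon + c_\epsilon - (n-1)\bigr)\,K_X \;+\; \epsilon\,(a_\epsilon + c_\epsilon)\,L, \qquad a_\epsilon \;:=\; n\,\frac{K_X\cdot L_\epsilon^{n-1}}{L_\epsilon^n}.
\]
Since $K_X$ is nef and $L$ is ample, ampleness of $D_\epsilon$ is guaranteed as soon as the $K_X$-coefficient is non-negative; the $L$-coefficient is then strictly positive since $c_\epsilon > 0$.

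The key asymptotic input is to control $a_\epsilon$ as $\epsilon \to 0$. Expanding $L_\epsilon^n$ and $K_X \cdot L_\epsilon^{n-1}$ binomially in $\epsilon$ and extracting the leading term gives $a_\epsilon \to n-\kappa$, where $\kappa := \min\{j : K_X^{n-j}\cdot L^j > 0\}$ encodes the numerical dimension of $K_X$. The two extreme cases close cleanly: when $K_X$ is big and nef ($\kappa = 0$) the $K_X$-coefficient of $D_\epsilon$ tends to $1 + c_\epsilon > 0$, and when $K_X$ is numerically trivial ($\kappa = n$) one has $\sigma(K_X, L_\epsilon) \equiv 0$ so that the left-hand side of~\eqref{Equation double inequality 2} reduces directly to $c_\epsilon > 0$, again giving J-coercivity.

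The main obstacle is the intermediate range $2 \leq \kappa \leq n-1$, where $K_X$ is nef, non-big, and not numerically trivial. There Lemma~\ref{Lemma one over t}, applied to $L_\epsilon$ viewed as a segment from $K_X \in \partial\mathcal{C}_X$ toward $L \in \mathcal{C}_X$, gives $\sigma(K_X, L_\epsilon) \to 1$ as $\epsilon \to 0$, and the $K_X$-coefficient tends to $1-\kappa+c_\epsilon$, which may be negative unless $c_\epsilon$ exceeds $\kappa - 1$. Handling this requires combining the precise form of $\sigma(\cdot,L_\epsilon)$ from Lemma~\ref{Lemma one over t} with a careful balancing of the $K_X$- and $L$-contributions in $D_\epsilon$ for a judicious choice of $\epsilon$; this is where the methods of~\cite{SD4} enter most substantively, and is the step I expect to be most delicate.
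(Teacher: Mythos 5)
Your reduction to Theorem \ref{Lemma main} and to the sufficient criterion \eqref{Equation W1} is the same as the paper's, and your computation of $D_\epsilon$ and of the asymptotics $a_\epsilon \to n-\kappa$ is correct. The only structural difference is the choice of degenerating family: the paper does not work with $L_\epsilon = K_X+\epsilon L$ directly, but with a segment $L_t=(1-t)T+tK_X$ joining $K_X$ to an auxiliary nef-but-not-ample class $T$ chosen so that $L_t$ is ample for $t\in(0,1)$; Lemma \ref{Lemma one over t} then gives $\sigma(K_X,L_t)=1/t$ exactly, the quantity to control becomes $R_0(t)=n\frac{K_X\cdot L_t^{n-1}}{L_t^n}-\frac{n-1}{t}$, and the paper concludes from $R_0(1)=1$ by continuity. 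Your family is arguably the more natural one (it matches the classes in the statement), and your limiting coefficient $1-\kappa+c_\epsilon$, with $\kappa=\min\{j:K_X^{n-j}\cdot L^j>0\}$, is the exact analogue of the paper's $R_0(1)$.

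However, the proposal is incomplete, and the part you leave open is not a technicality. When $2\le\kappa\le n-1$ the coefficient $1-\kappa+c_\epsilon$ has no reason to be non-negative: the only available lower bound on $c_\epsilon=\delta_1(L_\epsilon)-\delta_{2,K}(L_\epsilon)$ is the alpha-invariant bound \eqref{equation inequality}, and $\alpha_X(L_\epsilon)$ need not exceed $\tfrac{n}{n+1}(\kappa-1)$ (it stays bounded as $\epsilon\to0$ already in product examples), so "careful balancing" with the methods of \cite{SD4} will not close this case --- Lemma \ref{Lemma one over t} controls only the Seshadri term, not the intersection-theoretic term whose degeneration is the whole problem. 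Even for $\kappa=1$ your argument needs $c_\epsilon$ bounded away from $0$ uniformly as $\epsilon\to0$, which the alpha bound alone does not supply. In fairness, your expansion makes explicit what the paper's step "$R_0(1)=1$" presupposes: at $t=1$ the expression $n\frac{K_X\cdot L_t^{n-1}}{L_t^n}$ is $K_X^n/K_X^n$, so the identity requires $K_X$ big, and when $K_X^n=0$ the limit is $n-\kappa'$ for the analogous $\kappa'\ge 1$, so that $\lim_{t\to1}R_0(t)\le 0$. In other words, you have correctly carried out the argument in the big (and numerically trivial) cases and located precisely the crux of the general nef case; what is missing is a genuinely new input for intermediate numerical dimension of $K_X$, not a detail to be filled in.
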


\begin{proof}
Suppose that $K := K_X$ is nef but not ample (if it is ample then we already know that cscK metrics exist, by Aubin, Yau \cite{Aubin,Yau}). As explained above it is a consequence of \cite{ChenChengII, CollinsGabor, DonaldsonJobservation, Weinkove1, Weinkove2} that cscK metrics must exist if the K\"ahler class
$$
\theta := K + (\delta_1(L) - \delta_{2,K}(L))L 
$$
satisfies
$$
n \frac{\theta.L^{n-1}}{L^n} - (n-1)\sigma(\theta,L) > 0.
$$
Now fix an auxiliary line bundle $T$ which is nef but not ample, in such a way that $L_t := (1-t)T + tK 
$ 
is ample for all $t \in (0,1)$. Consider
$$
R_{\epsilon}(t) := n\frac{(K + \epsilon L_t) \cdot L_t^{n-1}}{L_t^n} - (n-1)\sigma((K + \epsilon L_t), L_t) 
$$
as a function of $\epsilon \in \mathbb{R}$ and $t \in (0,1)$. By the last part of Lemma \ref{Lemma one over t} we then have 
$$
R_{0}(t) = n\frac{K \cdot L_t^{n-1}}{L_t^n} - \frac{n-1}{t},
$$
so that in particular $R_0(1) = 1$. By continuity we may therefore fix $t_0 \in (0,1)$ such that $R_0(t) > 0$ for all $t \in (t_0,1)$. Moreover, it can be easily checked that
$$
\sigma((K + \epsilon L_t), L_t) = \sigma(K,L_t) + \epsilon
$$
and hence also
$$
R_{\epsilon}(t)  = \mathrm{R}(t) + \epsilon.
$$
It follows that $R_{\epsilon}(t) > 0$ for all $t > t_0$ and all $\epsilon > 0$. 
Taking $\epsilon_{L_t} := \delta_1(L_t) - \delta_{2,K}(L_t)$ and noting that this quantity is always positive, it follows from Theorem \ref{Lemma main} and \eqref{Equation double inequality} that $c_1(L_t)$ admits cscK metrics for all $t > t_0$. 
By possibly rescaling $L$ if necessary, it follows that for every ample line bundle $L$ on $X$, there is  $\epsilon_{X,L} > 0$ such that for all $0 < \epsilon < \epsilon_{X,L}$, there exists a unique cscK metric in the K\"ahler class $-c_1(X) + \epsilon c_1(L)$. 
This is what we wanted to prove. 
\end{proof}

\noindent Since it is also clear that the line bundles can be replaced everywhere with arbitrary nef or K\"ahler $(1,1)$-cohomology classes, we obtain precisely the statement of Theorem \ref{Thm main} in the introduction.



%
%
\medskip

\subsection{Proof of Corollary \ref{Cor 3} and Corollary \ref{Cor 2}}

By Theorem \ref{Thm main} there exists a K\"ahler class $[\omega] \in H^{1,1}(X,\mathbb{R})$ (as described in the statement of the theorem) such that the associated Mabuchi K-energy functional $\mathrm{M}$ is $d_1$-proper on the space $\mathcal{H}_{\omega}$ of K\"ahler potentials. 
In particular, by \cite{ChenChengII} there exists a cscK metric in $[\omega]$, and we may fix once and for all a cscK potential $v \in \mathcal{H}_{\omega}$.
Let moreover $G := \mathrm{Aut}_0(X)$ be the connected complex Lie group given by the identity component of the group of biholomorphisms
of $X$. Denote by $\mathfrak{g}$ its Lie algebra, consisting of holomorphic vector fields on $X$. 
The key step of the proof of Corollary \ref{Cor 3} is showing that properness of the K-energy forces $G$ to be compact. To see this, we recall the arguments of \cite{DR}: 
Let $\mathrm{K} := \mathrm{Isom}_0(X,\omega_v)$ be the identity component of the isometry group of $(X,\omega_v)$, and let $\mathfrak{k}$ be 
its Lie algebra. Note that $\mathrm{K}$ is compact, since $X$ is compact (see \cite[Proposition 29.4]{Postnikov}). By a result of Matsushima and Lichnerowicz (see \cite[Theorem 3.6.1]{Gauduchon}) and by \cite[Proposition 6.2]{DR}, it follows that if $g_j \in G$, then there are $k_j \in \mathrm{K}$ and $V_j \in \mathfrak{k}$ such that $g_j = k_j \mathrm{exp}_I(JV_j)$. Recall moreover that $G$ acts on the space of normalized potentials $\mathcal{H}_{\omega} \cap \mathrm{E}^{-1}(0)$ (where $\mathrm{E} := \mathrm{E}_{\omega}^{\omega}$ is the Monge-Amp\`ere energy functional) such that $g.\varphi \in \mathcal{H}_{\omega} \cap \mathrm{E}^{-1}(0)$ is the unique potential defined by the relation
$$
\omega_{g.\varphi} = g^*\omega_{\varphi}.
$$
With this setup, it was shown in \cite[Section 7.1]{DR} that $$[0,+\infty) \ni t \mapsto \mathrm{exp}_I(-tJV_j).v \in \mathcal{H}_{\omega} \cap \mathrm{E}^{-1}(0)$$ is a $d_1$-geodesic ray. 
Since the K-energy is invariant under the action of $G$, properness then implies that $d_1(g_j.v,v) \leq C$, with a bound independent of $j$. Arguing as in \cite[Lemma 3.7]{BDL} it then follows that $d_1(\mathrm{exp}_I(-JV_j).v,v) \leq C$, and hence $||V_j||$ is uniformly bounded in $\mathfrak{k} := \mathfrak{isom}(X,\omega_v)$. By compactness we can then, up to relabeling, choose $V \in \mathfrak{k}$ and $k \in \mathrm{K}$ such that $k_j \rightarrow k$ and $V_j \rightarrow V$ smoothly. Hence also $g_j \rightarrow g := k\mathrm{exp}_I(JV)$ smoothly, and properness implies compactness.
Finally, it is a standard exercise (see e.g. \cite[Chapter 8]{FultonHarris}) to see that a connected compact complex Lie group is abelian, and in fact a complex torus. This proves Corollary \ref{Cor 3}. 

In order to prove Corollary \ref{Cor 2} we finally note that the machinery of Arezzo-Pacard \cite{ArezzoPacard} applies in case there are no non-trivial holomorphic vector fields with zeros, that is no non-trivial Hamiltonian vector fields. To see that we are in this case, we once again use properness of the K-energy functional: Let as before $v$ be a cscK potential and suppose for contradiction that $V \in \mathfrak{isom}(X,\omega_v)$ is a non-trivial Hamiltonian vector field. By the work of \cite{Darvas15} and \cite[Section 7.1]{DR} it follows that $[0,+\infty) \ni t \mapsto \mathrm{exp}_I(-tJV).v$ defines a constant speed geodesic ray in the space of normalized K\"ahler potentials $\mathcal{H}_{\omega} \cap \mathrm{E}^{-1}(0)$.
In particular, if $V$ is non-trivial then well-known convexity properties imply that $d_1(v,\mathrm{exp}_I(-tJV).v) \rightarrow +\infty$ as $t \rightarrow +\infty$ (recall that $d_1$ is comparable to the norm $||.||$ and to Aubin's J-functional, which is strictly convex along geodesics).  
%
On the other hand, since $(X,\omega)$ admits a cscK metric, the K-energy 
is invariant under the action of $G$, contradicting the properness assumption. 
In other words $X$ cannot support any non-trivial Hamiltonian vector fields, and \cite{ArezzoPacard} combined with Theorem \ref{Thm main} yields the statement of 
Corollary \ref{Cor 2}.

\medskip


\begin{thebibliography}{1}

\bibitem{Aubin} T. Aubin, \emph{Equations du type {Monge}-{Amp\`ere} sur les {vari\'et\'es} {k\"ahleriennes} compactes}, Bull. Sci. Math. (2) 102 (1978), no. 1, 63-95


\bibitem{ArezzoPacard} C. Arezzo and F. Pacard, \emph{Blowing up and desingularizing constant scalar curvature {K\"ahler} manifolds}, Acta Math. \textbf{196} (2006), no. 2, 179-228.






\bibitem{BBGZ} R. Berman, S. Boucksom, V. Guedj and A. Zeriahi, \emph{A variational approach to complex {Monge}-{Amp\`ere} equations}, Publ. Math. de l'IHES \textbf{117} (2013), 179-245.


\bibitem{BDL} R. Berman and T. Darvas and C. Lu, \emph{Regularity of weak minimizers of the {K-energy} and applications to properness and {K}-stability}, Preprint arXiv:1602.03114v1, (2016).





\bibitem{Calabi} E. Calabi, \emph{Extremal {K\"ahler} metrics}, In: ``Seminars on Differential Geometry'', S.T. Yau (ed.), Princeton Univ. Press and Univ. of Tokyo Press, Princeton, New York, 1982, 259-290. 




\bibitem{GaoChen} G. Chen, \emph{On {J}-equation}, Preprint arXiv:1905.10222 (2019).


\bibitem{ChenChengII} X.X. Chen and J. Cheng, \emph{On the constant scalar curvature {K\"ahler} metrics, existence results}, Preprint arXiv:1801.00656 (2018).

\bibitem{ChenChengIII} X.X. Chen and J. Cheng, \emph{On the constant scalar curvature {K\"ahler} metrics, general automorphism group}, arXiv:1801.05907 (2018).




\bibitem{Chen2000} X.X. Chen, \emph{On the lower bound of the {Mabuchi} {K-energy} and its application}, Int. Math. Res. Not. \textbf{12} (2000), 607-623.




\bibitem{CollinsGabor} T. Collins and G. Sz\'ekelyhidi, \emph{Convergence of the {J}-flow on toric manifolds}, J. Diff. Geom. \textbf{107} (2017) no. 1, 47-81.

\bibitem{Darvas14} T. Darvas, \emph{The {Mabuchi} completion of the space of {K\"ahler} potentials}, Amer. J. Math \textbf{139} (2017), 1275-1313.

\bibitem{Darvas15} T. Darvas, \emph{The {Mabuchi} Geometry of finite energy classes}, Adv. Math. \textbf{285} (2015), 182-219.

\bibitem{Darvassurvey} T. Darvas, \emph{Geometric pluripotential theory on {K\"ahler} manifolds}, Survey article (2017).

\bibitem{Darvas} T. Darvas, \emph{Weak geodesic rays in the space of {K\"ahler} potentials and the class {$E(X, \omega_0)$}}, J. Inst. Math. Jussieu \textbf{16} (2017), no. 4, 837-858.

\bibitem{DR} T. Darvas and Y.A. Rubinstein, \emph{Tian's properness conjecture and {Finsler} geometry of the space of {K\"ahler} metrics}, J. Amer. Math. Soc. \textbf{30} (2017), no. 2, 347-387.







\bibitem{DonaldsonJobservation} S.K. Donaldson, \emph{Moment maps and diffeomorphisms}, Asian J. Math., \textbf{3} (1999) no. 1, pp. 1-15.

%
%



\bibitem{FultonHarris} W. Fulton and J. Harris, \emph{Representation theory: A first course}, Springer-Verlag New York, 2004. 

\bibitem{Gauduchon} P. Gauduchon, \emph{Calabi's extremal metrics: an elementary introduction}, manuscript, 2010.

\bibitem{JianShiSong} W. Jian, Y. Shi and J. Song, \emph{A remark on constant scalar curvature K\"ahler metrics on minimal models}, Proc. Amer. Math. Soc.  \textbf{147} (2019), 3507-3513.

\bibitem{Kobayashibook} S. Kobayashi, \emph{Transformation groups in differential geometry}, Springer-Verlag Berlin Heidelberg New York, 1972. 





\bibitem{MabuchiKenergy1} T. Mabuchi, \emph{A functional integrating {Futaki} invariants}, Proc. Japan Acad. \textbf{61} (1985), 119-120.

\bibitem{Mabuchi} T. Mabuchi, \emph{K-energy maps integrating {Futaki} invariants}, Tohoku Math. J. \textbf{38} (1986), no. 4, 575-593.

\bibitem{Mabuchisymplectic} T. Mabuchi, \emph{Some symplectic geometry on compact {K\"ahler} manifolds {I}}, Osaka J. Math. \textbf{24} (1987), 227-252.



\bibitem{Postnikov}  M.M. Postnikov, \emph{Geometry VI. Riemannian geometry}, Springer, 2001.








\bibitem{SD4} Z. {Sj\"ostr\"om} Dyrefelt, \emph{Optimal lower bounds for Donaldson's J-functional}, Adv. Math. DOI: 10.1016/j.aim.2020.107271. 




\bibitem{SongWeinkove} J. Song and B. Weinkove, \emph{On the convergence and singularities of the {J}-flow with applications to the {Mabuchi} energy}, Comm. Pure Appl. Math., \textbf{61} (2008), 210 - 229.



\bibitem{Tian} G. Tian, \emph{{K\"ahler}-{Einstein} metrics with positive scalar curvature}, Invent. Math. \textbf{130} (1997), no. 1, 1-37.






\bibitem{Weinkove1} B. Weinkove, \emph{Convergence of the J-flow on {K\"ahler} surfaces}, Comm. Anal. Geom. \textbf{12} (2004), no. 4, 949-965.


\bibitem{Weinkove2} B. Weinkove, \emph{On the J-flow in higher dimensions and the lower boundedness
of the Mabuchi energy}, J. Diff. Geom. \textbf{73} (2006), no. 2, 351-358.

\bibitem{Yau} S.-T. Yau, \emph{On the Ricci curvature of a compact {K\"ahler} manifold and the complex Monge-{Amp\`ere} equation, I, Comm. Pure Appl. Math. 31 (1978), 339 - 411.}





\end{thebibliography}
\end{document}